\newcommand{\proset}{\,\mathrel{\lower 4pt\hbox{$\scriptscriptstyle/$}
\mkern -14mu\subseteq }\,} 
 \newtheorem{theorem}{Theorem}[section]
  \newtheorem{corollary}[theorem]{Corollary}
 \newtheorem{lemma}[theorem]{Lemma}
\newtheorem{remark}[theorem]{Remark}
 \newtheorem{definition}[theorem]{Definition}
 \newtheorem{example}[theorem]{Example}
\numberwithin{equation}{section}
\def\ker{\operatorname{ker}}
\def\coker{\operatorname{coker}}
\date{\today}
\title{Homotopy Invariance of $K$-groups using Grayson's Technique}
 \author{Sourayan Banerjee} 
 \address{Department of Mathematics, Indian Institute of Technology,  Kanpur\\Uttar Pradesh-208016, India}
 \email{sourayanb@iitk.ac.in }
 \keywords{Nil K-groups, Binary complexes}
 \subjclass[2010]{Primary 19D06; Secondary 19D35, 18E10.}
 \thanks{The author was supported by the Institute Post-Doctoral fellowship, Indian Institute of Technology, Kanpur}
\begin{document}
 \maketitle

 \begin{abstract}
     Homotopy invariance of $K$-theory has always been a point of interest. In this article, with the help of the generators of Nil$K$-groups using Grayson's technique, it is shown that if $R$ is a Pr\"{u}fer domain, then $K_n(R) \cong K_n(R[s])$ for all $n>0.$ This is a specific case of the already published work of the author and Vivek Sadhu. However, contrary to the method used before in \cite{BS22}, we specifically prove the isomorphism by showing that Nil$K$-groups vanish.
 \end{abstract}
 \section{Introduction}
Quillen gave the definition of an exact category and also defined the higher $K$-groups over a small exact category $\mathcal N$. Classically, he proved that if $R$ is a regular Noetherian ring, then $K_n(R[t_1,t_2,.....,t_m]) \cong K_n(R)$, for every $n \in \mathbb Z$ and $m \in \mathbb N$. In 2012, Grayson defined higher $K$-groups over exact categories (that support long exact sequences) as a quotient of the Grothendieck group of the category of bounded acyclic binary complexes over $\mathcal N$, which we denote as $B^{q^{n}}(\mathcal N)$. To be precise, he proved that $$K_n^{Gr}(\mathcal N) \cong K_0(B^{q^{n}}(\mathcal N))/T^n_{\mathcal N} \cong K_n^Q(\mathcal N),$$ where $K_n^{Gr}$ and $K_n^Q$ denote the $K$-groups (for $n\geq 0$) obtained via the definition of Grayson and Quillen respectively, and $T^n_\mathcal N$ denotes the subgroup of $K_0(B^{q^{n}}(\mathcal N))$ generated by the diagonal complexes. Throughout the article we would assume that $K_n(\mathcal N):= K_n^{Gr}(\mathcal N)$. Given a commutative unital ring $R,$ let ${\bf P}(R)$  denote the category of finitely generated projective $R$-modules. Let ${\bf Nil}(R)$ be a category consisting of all pairs $(P, \nu),$ where $P$ is a finitely generated $R$-module and $\nu$ is a nilpotent endomorphism of $P.$ Let ${\rm Nil}_{0}(R)$ denote the kernel of the forgetful map $K_{0}({\bf Nil}(R)) \to K_{0}({\bf P}(R)) =: K_{0}(R).$ The group ${\rm Nil}_{0}(R)$ is generated by elements of the form $[(R^{n}, \nu)]- [(R^{n}, 0)]$ for some $n$ and some nilpotent endomorphism $\nu.$  Let ${\rm Nil}(R)$ denote the homotopy fibre of forgetful functor $K{\bf Nil}(R) \to  K{\bf P}(R):= K(R).$ The $n$-th Nil group ${\rm Nil}_{n}(R)$ is $\pi_{n}{\rm Nil}(R).$ Since the forgetful functor splits,  $K{\bf Nil}(R) \simeq {\rm Nil}(R) \times K(R).$  This implies that $K_{n}{\bf Nil}(R)\cong {\rm Nil}_{n}(R)\bigoplus K_{n}(R)$ for every ring $R.$  There is an isomorphism  ${\rm Nil}_{n}(R)\cong NK_{n+1}(R)= \ker [K_{n+1}(R[t]) \stackrel{t\mapsto 0}\to K_{n+1}(R)]$ (see Theorem V.8.1 of \cite{wei1}). Since $K_{n+1}(R[t]) \cong K_{n+1}(R) \oplus NK_{n+1}(R)$ because of the canonical splitting, the group $NK_{n+1}(R)$ or Nil$_n(R)$ being trivial implies that $K_{n+1}(R[t]) \cong K_{n+1}(R)$. Thus, to show $NK_{n+1}(R)$ is trivial, we show that the generators of ${\rm Nil}_n(R)$ found in \cite{BS24} are trivial whenever the ring is semihereditary. What we precisely prove is the following:
\begin{theorem}
    Let $R$ be a Pr\"{u}fer domain, then the base change  $K_n(R[t]) \rightarrow K_n(R)$ is an isomorphism for all $n \geq 0$. 
\end{theorem}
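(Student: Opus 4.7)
The plan is to combine the canonical splitting $K_{n+1}(R[t]) \cong K_{n+1}(R) \oplus NK_{n+1}(R)$ with the isomorphism $\mathrm{Nil}_{n}(R) \cong NK_{n+1}(R)$ recorded in the introduction, thereby reducing the theorem to the vanishing $\mathrm{Nil}_{n}(R) = 0$ for every $n \geq 0$. The crucial ring-theoretic input is that a Pr\"ufer domain is precisely a semihereditary integral domain, so every finitely generated submodule of a finitely generated projective $R$-module is again projective.

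With the reduction in place, I would invoke the explicit generators of $\mathrm{Nil}_{n}(R)$ produced in \cite{BS24} through Grayson's binary-complex machinery. Each element of $\mathrm{Nil}_{n}(R)$ is represented, modulo diagonal complexes, by a bounded acyclic binary $n$-multicomplex whose entries are pairs $(P,\nu)$ with $P \in \mathbf{P}(R)$ and $\nu$ a nilpotent endomorphism that commutes with all of the differentials in both rails of every direction. The target is then to show that every such generator is equivalent, in $K_{0}(B^{q^{n}}\mathbf{Nil}(R))/T^{n}$, to a class represented by a multicomplex on which $\nu$ acts trivially, since such a class factors through the image of $K_{n}(R) \to K_{n}(\mathbf{Nil}(R))$ and hence vanishes in $\mathrm{Nil}_{n}(R)$.

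The engine for this reduction is the kernel filtration $0 \subseteq \ker\nu \subseteq \ker\nu^{2} \subseteq \cdots \subseteq \ker\nu^{m} = P$, where $m$ is the nilpotency index. Over a semihereditary ring, the map $\nu^{i}\colon P \to P$ realises $P/\ker\nu^{i}$ as a finitely generated submodule of $P$, so each quotient in this filtration is projective and the filtration splits termwise. Since $\nu$ and both families of differentials commute, the filtration is respected by the entire binary-multicomplex structure, yielding short exact sequences of binary $n$-multicomplexes
\[
0 \to (\ker\nu)^{\bullet} \to C^{\bullet} \to (C/\ker\nu)^{\bullet} \to 0,
\]
in which $\nu$ acts by zero on the subcomplex and with strictly smaller nilpotency index on the quotient. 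Inducting on $m$ and applying additivity for acyclic binary complexes then identifies $[C^{\bullet}]$ with a sum of classes supported on pairs $(Q,0)$, which lie in the image of $K\mathbf{P}(R) \to K\mathbf{Nil}(R)$ and therefore represent zero in $\mathrm{Nil}_{n}(R)$.

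The main obstacle, as I see it, lies in executing this filtration argument compatibly with the whole multicomplex structure. The degreewise splittings of the kernel filtration must be upgraded to splittings (up to the equivalence absorbed by $T^{n}$) of binary $n$-multicomplexes, and the subcomplex and quotient complex must themselves be bounded and acyclic with respect to both rails in every direction. I expect the technical crux to be verifying this inheritance of acyclicity, for which the semihereditary hypothesis is essential: it supplies the degreewise projectivity needed to invoke Grayson's additivity theorem and to close the induction on $m$ within the category of bounded acyclic binary multicomplexes.
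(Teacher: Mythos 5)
Your proposal follows essentially the same route as the paper: reduce to the vanishing of $\mathrm{Nil}_n(R)$ via the splitting $K_{n+1}(R[t]) \cong K_{n+1}(R) \oplus NK_{n+1}(R)$ together with $NK_{n+1}(R)\cong \mathrm{Nil}_n(R)$, then kill the Grayson-style generators $[(F_*,\dots,\nu)]-[(F_*,\dots,0)]$ by splitting the kernel filtration of $\nu$, the subquotients being finitely generated torsion-free and hence projective over a Pr\"ufer domain. The only cosmetic difference is that you peel off $\ker\nu$ from the bottom of the filtration while the paper peels off the top stage $\ker(\nu^{m-1})$, and the compatibility of the termwise splittings with the full multicomplex structure that you flag as the technical crux is exactly the ``net'' construction the paper uses.
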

As a corollary, we get that 
\begin{corollary}
    If $R$ is a Valuation ring or Dedekind domain, then $K_n(R[t]) \cong K_n(R).$
\end{corollary}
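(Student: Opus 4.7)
The plan is to reduce the desired isomorphism $K_n(R[t])\cong K_n(R)$ to the vanishing of the Nil-groups of $R$ and then to kill the Grayson-style generators of ${\rm Nil}_n(R)$ using the fact that a Pr\"ufer domain is semihereditary. Concretely, since the splitting $K_{n+1}(R[t])\cong K_{n+1}(R)\oplus NK_{n+1}(R)$ together with the identification $NK_{n+1}(R)\cong{\rm Nil}_n(R)$ recalled in the introduction reduces the statement for $n\ge 1$ to ${\rm Nil}_{n-1}(R)=0$, it suffices to prove ${\rm Nil}_n(R)=0$ for every $n\ge 0$; the case $n=0$ of the theorem itself is the classical fact that $K_0(R[t])\cong K_0(R)$ for semihereditary $R$.

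The guiding idea is the following filtration pattern. Given a pair $(P,\nu)$ in ${\bf Nil}(R)$ with $P$ finitely generated projective, the descending filtration $P\supseteq \nu P\supseteq \nu^2 P\supseteq\cdots$ terminates because $\nu$ is nilpotent. Since $R$ is semihereditary, every finitely generated submodule of a finitely generated projective module is projective, so each $\nu^i P$ and each subquotient $\nu^i P/\nu^{i+1}P$ is finitely generated projective. On each subquotient the endomorphism induced by $\nu$ vanishes, so in $K_0({\bf Nil}(R))$ one has $[(P,\nu)]=\sum_{i\ge 0}[(\nu^i P/\nu^{i+1}P,0)]=[(P,0)]$, which kills the generator $[(P,\nu)]-[(P,0)]$ of ${\rm Nil}_0(R)$.

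For $n\ge 1$ I would lift this filtration trick to the level of Grayson's binary complexes. From \cite{BS24} one has generators of ${\rm Nil}_n(R)$ given by bounded acyclic binary $n$-complexes in ${\bf Nil}(R)$, taken modulo the diagonal subgroup $T^n_{{\bf Nil}(R)}$ and modulo complexes coming from $B^{q^n}({\bf P}(R))$. In such a complex every object is a pair $(P_k,\nu_k)$, and every structure map (top or bottom differential, in each of the $n$ directions) is a morphism in ${\bf Nil}(R)$, hence commutes with the respective $\nu_k$'s. Consequently the objectwise $\nu$-power filtration assembles into a filtration of the binary $n$-complex by sub-binary-complexes whose successive subquotients carry the zero nilpotent endomorphism and therefore lie in $B^{q^n}({\bf P}(R))$. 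Additivity applied to this filtration then expresses the original class, modulo $T^n_{{\bf Nil}(R)}$, as a sum of classes pulled back from $B^{q^n}({\bf P}(R))$, which vanish in the quotient defining ${\rm Nil}_n(R)$.

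The main technical obstacle is the verification that the objectwise $\nu$-power filtration really does yield an admissible filtration of the binary $n$-complex, so that each stage is itself a bounded acyclic binary $n$-complex in ${\bf Nil}(R)$ and Grayson's additivity is available in the binary setting. Semihereditarity of $R$ enters exactly here to guarantee that every $\nu_k^{\,i} P_k$ and every subquotient remains finitely generated projective, so that the inclusions $\nu_k^{\,i+1}P_k\hookrightarrow \nu_k^{\,i}P_k$ and the projections $\nu_k^{\,i}P_k\twoheadrightarrow \nu_k^{\,i}P_k/\nu_k^{\,i+1}P_k$ are admissible monics and epis at every vertex and in every one of the $n$ cube directions simultaneously. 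Once this compatibility is in place, the reduction to ${\bf P}(R)$-generators and the ensuing vanishing are formal consequences of the definitions.
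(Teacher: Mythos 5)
Your overall strategy is the one the paper follows: the corollary is just the observation that valuation rings and Dedekind domains are Pr\"ufer domains, and the real content is the vanishing of ${\rm Nil}_n(R)$ for Pr\"ufer $R$, proved by filtering the nilpotent endomorphism objectwise and applying additivity to the resulting conflations of binary multicomplexes. However, your choice of filtration breaks the argument. You filter by images, $P\supseteq \nu P\supseteq \nu^2P\supseteq\cdots$, and claim each subquotient $\nu^iP/\nu^{i+1}P$ is finitely generated projective because $R$ is semihereditary. Semihereditarity gives you that the \emph{submodules} $\nu^iP$ are projective, but it says nothing about the \emph{quotients}: $\nu^iP/\nu^{i+1}P$ need not be torsion-free. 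For example, over $R=\mathbb{Z}$ take $P=\mathbb{Z}^2$ and $\nu(a,b)=(2b,0)$; then $P/\nu P\cong\mathbb{Z}\oplus\mathbb{Z}/2$, which is not projective, so $(P/\nu P,0)$ is not an object of ${\bf Nil}(R)$ and the sequence $0\to(\nu P,\nu)\to(P,\nu)\to(P/\nu P,0)\to 0$ is not a conflation there. Consequently the additivity relation $[(P,\nu)]=\sum_i[(\nu^iP/\nu^{i+1}P,0)]$ is not available, and the same failure propagates to every vertex of the binary $n$-complexes.

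The paper avoids this by using the ascending kernel filtration $0\subseteq\ker(\nu)\subseteq\ker(\nu^2)\subseteq\cdots\subseteq P$ instead. Its successive quotients $\ker(\nu^{i+1})/\ker(\nu^i)$ embed into the torsion-free module $P$ (via the map induced by $\nu^i$), hence are finitely generated torsion-free and therefore projective over a Pr\"ufer domain; moreover $\nu$ carries $\ker(\nu^{i+1})$ into $\ker(\nu^i)$, so the induced endomorphism on each quotient is zero, exactly as your argument needs. With that substitution the rest of your outline --- compatibility of the objectwise filtration with both differentials in all $n$ directions, additivity, and reduction modulo the diagonal subgroup --- is precisely what the paper does in Section 5. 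So the gap is localized but genuine: replace the image filtration by the kernel filtration and justify projectivity of the subquotients via torsion-freeness rather than via semihereditarity alone.
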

Both the results have already been proved in far generality, i.e., $K$-theory is homotopy invariant for both Pr\"{u}fer domains and Valuation rings. But the motivation behind this article is to show an application of the already found generators of $\rm Nil$-$K$-groups. For instance, over regular noetherian rings to show homotopy invariance of $K$-theory, it is sufficient to show that $K_n(R) \cong K_n(R[s])$ along with $K_n(R[s,s^{-1}]) \cong K_n(R) \oplus K_{n-1}(R).$ Thus, using the generators in this article, we would see that the following fundamental theorem holds.
\begin{theorem}{\label{fdthm}}
    If $R$ is a Dedekind domain, then $K_n(R[s]) \cong K_n(R)$, in addition $$K_n(R[s,s^{-1}]) \cong K_n(R) \oplus K_{n-1}(R).$$ 
\end{theorem}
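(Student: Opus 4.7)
The plan is to derive both halves of the statement from results already in place, plus one classical input. The first isomorphism $K_n(R[s]) \cong K_n(R)$ is immediate: every Dedekind domain is a Pr\"ufer domain, so the preceding Theorem (and its Corollary) applies directly. In particular, this step already establishes the vanishing $NK_n(R) = 0$ for all $n \geq 1$, obtained via the Grayson-style triviality of the generators of $\mathrm{Nil}_{n-1}(R)$ that the article is built around.

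For the second isomorphism I would appeal to Bass's fundamental theorem of algebraic $K$-theory, which for any ring $R$ and any integer $n \geq 1$ produces the natural decomposition
$$K_n(R[s,s^{-1}]) \;\cong\; K_n(R) \,\oplus\, K_{n-1}(R) \,\oplus\, NK_n(R) \,\oplus\, NK_n(R).$$
Substituting $NK_n(R) = 0$ from the first step annihilates the two Nil summands and leaves exactly
$$K_n(R[s,s^{-1}]) \cong K_n(R) \oplus K_{n-1}(R),$$
as required.

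Viewed this way, Theorem \ref{fdthm} is essentially a corollary, and no genuinely new obstacle arises here: the hard step, showing that every Grayson generator of $\mathrm{Nil}_{n-1}(R)$ is a binary boundary for $R$ semihereditary, was already carried out in the proof of the previous Theorem. The only point I would be slightly careful about is the identification between Grayson's $K$-groups and Quillen's, which is what lets us transport the Bass decomposition into the framework of bounded acyclic binary complexes used throughout the paper; but this is precisely the isomorphism $K_n^{Gr}(\mathcal{N}) \cong K_n^Q(\mathcal{N})$ recalled in the introduction, so no further argument is needed. Thus the main obstacle is conceptual rather than technical: to certify that the vanishing of the Grayson generators proved earlier is strong enough to feed cleanly into both the homotopy invariance input and the Bass decomposition.
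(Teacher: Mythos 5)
Your proposal is correct and follows essentially the same route as the paper: the first isomorphism is the preceding Pr\"ufer-domain theorem applied to the Dedekind case, and the second is the Bass--Quillen fundamental theorem $K_n(R[s,s^{-1}]) \cong K_n(R) \oplus K_{n-1}(R) \oplus NK_n(R) \oplus NK_n(R)$ (the paper cites the proof of Theorem V.6.3 of Weibel) combined with the vanishing $NK_n(R)=0$ already secured by the first part. No gap.
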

This article will be summarized as follows. Preliminaries regarding Pr\"{u}fer domains will be discussed in Section 2. Especially, we will mention that torsion-free finitely generated modules over a Pr\"{u}fer domain are always projective, which will play a central role in proving that the base change map is homotopy invariant. In section 3, we will discuss Grayson's technique in detail. To do that, we will first recall the definition of acyclic binary complexes over a small exact category $\mathcal N$ that supports long exact sequences, and then describe $K_n(\mathcal N)$ with the help of generators as isomorphism classes of $n$-dimensional binary acyclic complexes. Section 4 would be dedicated to recalling the generators of $\rm{Nil}$-$K$ groups with the help of Grayson's technique.  Finally, in Section 5, we prove our theorem and lay some groundwork for future direction. 
 \section{Basics of Pr\"ufer Domains}
 In this section, we will recall what a Pr\"{u}fer domain is, alongside some of its useful and relevant properties.
 Let $R$ be a commutative ring. A finitely generated $R$-module $M$ is called {\it coherent} if every finitely generated submodule of $M$ is finitely presented. The ring $R$ is {\it coherent} if it is a coherent module over itself, i.e., every finitely generated ideal of $R$ is finitely presented.
\begin{definition}
An integral domain $R$ is a {\it Pr\"{u}fer} domain if every finitely generated nonzero ideal $I$ of $R$ is invertible. Equivalently, $R_{\mathfrak{p}}$ is a valuation domain for all prime ideals $\mathfrak{p}$ of $R.$ 
\end{definition}
\begin{example}{\rm
\begin{enumerate}
 \item Every valuation domain is Pr\"{u}fer.
 
 \item The ring of integer-valued polynomials $${\rm Int}(\mathbb{Z})= \{f\in \mathbb{Q}[x]| f(\mathbb{Z})\subset \mathbb{Z}\}$$ is a Pr\"{u}fer domain. The ideal $\{f\in {\rm Int}(\mathbb{Z})| f(0) ~{\rm is~  even}\}$ is not finitely generated. Thus, ${\rm Int}(\mathbb{Z})$ is a non-noetherian Pr\"{u}fer domain. More generally, if $D$ is a Dedekind domain with finite residue fields then ${\rm Int}(D)$ is a Pr\"{u}fer domain. For example, one can consider  $D=\mathcal{O}_{K},$ the ring of integers of a number field $K$ (see \cite{cahen}).

  \item A domain whose finitely generated ideals are principal is called a Bezout domain. Every such domain is a Pr\"{u}fer domain. As an example, we can look at the ring of algebraic integers $\mathbb{A}$ that has a minimal polynomial in $\mathbb{Z}[x]$. Every finitely generated ideal of $\mathbb A$ is principal.

\item  A commutative ring with unity $R$ is called semi-hereditary if every finitely generated ideal of $R$ is a projective $R$-module. Every semi-hereditary domain is a Pr\"{u}fer domain.
\end{enumerate}

 }
\end{example}

We now state some well known facts pertaining to Pr\"{u}fer domains.

\begin{enumerate}
 \item If $R$ is a Pr\"{u}fer domain then it is integrally closed.
 
 \item Every torsion-free module over a Pr\"{u}fer domain is flat.

\item  Every finitely generated torsion-free module over a Pr\"{u}fer domain is projective.
 
 \item A Pr\"{u}fer domain is always  coherent.
 
 \item If $R$ is a Pr\"{u}fer domain then the weak global dimension of $R$ is at most one (see p. 25 of \cite{Glaz}).

 \end{enumerate}
As we have finished briefly discussing the preliminaries of Pr\"{u}fer domains, now in the next section we will recall the Grayson's technique to define the higher $K$-groups.

 \section{Binary Complexes and Grayson's Theorem}
 Let $\mathcal{N}$ denote an exact category. A bounded chain complex $N$ in $\mathcal{N}$ is said to be an {\it acylic chain complex} if each differential $d_{i}: N_{i} \to N_{i-1}$ can be factored as $N_{i} \to Z_{i-1} \to N_{i-1}$ such that each $0\to Z_{i} \to N_{i} \to Z_{i-1}\to 0$ is a short exact sequence of $\mathcal{N}$ (see Definition 1.1 of \cite{Gray}).\\
 \begin{remark}
     One thing to notice here is that all bounded acyclic chain complexes are exact sequences, but not all bounded exact sequences are acyclic.\\
     For example, fix the exact category $\mathcal N$ to be the category of finitely generated free $R$-modules, $\rm Free (R)$. Let $P \in {\bf P}(R)$ which is not free but stably free, i.e., for some $n$ and $m$ $\in \mathbb N$, $P \oplus R^m \cong R^n.$ Then the following sequence $$\xymatrix{
     R^m \ar[r]& R^n \ar[dr]\ar[r] & R^n \ar[r] &R^m\\
     & & P \ar[u]
     }$$ is exact but not acyclic due to the fact that $P$ is not free.
 \end{remark}Let $C\mathcal{N}$ denote the category of bounded chain complexes in $\mathcal{N}.$ The full subcategory of $C\mathcal{N}$ consisting of  bounded acylic complexes in $C\mathcal{N}$ is denoted by $C^{q}\mathcal{N}$ (see section 2 of \cite{Gray}). The category $C^{q}\mathcal{N}$ is exact.

A chain complex in $\mathcal{N}$ with two differentials (not necessarily distinct) is called a {\it binary chain complex} in $\mathcal{N}.$ In other words, it is a triple $(N_{*}, d, d^{'})$ with $(N_{*}, d)$ and $(N_{*}, d^{'})$ are in $C\mathcal{N}$ (see Definition 3.1 of \cite{Gray}). A morphism between two binary complexes $(N_{*}, d, d^{'})$ and  $(\tilde{N}_{*}, \tilde{d}, \tilde{d}^{'})$ is a morphism between the underlying graded objects $N$ and $\tilde{N}$ that commutes with both differentials. The category of bounded binary complexes in $\mathcal{N}$ is denoted by $B\mathcal{N}.$ There is always a diagonal functor (see Definition 3.1 of \cite{Gray}) $$ \Delta: C\mathcal{N} \to B\mathcal{N}, ~{\rm defined ~by}~ \Delta((N_{*}, d))= (N_{*}, d, d).$$ If a binary complex is in the image of $\Delta$, we call it a diagonal binary complex, i.e. when $d = d'$ for $(N_*,d,d') \in B\mathcal{N}.$ As before, let $B^{q}\mathcal{N}$ denote the full subcategory of $B\mathcal{N}$ whose objects are bounded acylic binary complexes. This is also an exact category (see section 3 of \cite{Gray}).

By iterating, one can define exact category $(B^{q})^{n}\mathcal{N}=B^{q}B^{q}\cdots B^{q}\mathcal{N}$ for each $n\geq 0.$ An object of the exact category  $(B^{q})^{n}\mathcal{N}$ of bounded acylic binary multi-complexes of dimension $n$ in $\mathcal{N}$ is a bounded $\mathbb{Z}^{n}$- graded collection of objects of $\mathcal{N},$ together with a pair of acyclic differentials $d^{i}$ and $\tilde{d^{i}}$ in each direction $1\leq i\leq n,$ where the differentials $d^{i}$ and $\tilde{d^{i}}$ commute with $d^{j}$ and $\tilde{d^{j}}$ whenever $i\neq j.$ Thus, a typical object looks like $(N_{*}, (d^{1}, \tilde{d^{1}}), (d^{2}, \tilde{d^{2}}), \dots, (d^{n}, \tilde{d^{n}})),$ where $N_{*}$ is a
bounded $\mathbb{Z}^{n}$- graded collection of objects of $\mathcal{N}$ (see Defintion 7.3 of \cite{Gray}). We say that an acyclic binary multi-complex $(N_{*}, (d^{1}, \tilde{d^{1}}), (d^{2}, \tilde{d^{2}}), \dots, (d^{n}, \tilde{d^{n}}))$ is {\it diagonal} if $d^{i}=\tilde{d^{i}}$ for at least one $i$ (see Corollary 7.4 of \cite{Gray} and Definition 2.2). 


 
 In \cite{Nenasheb}, Nenashev described $K_{1}$-group in terms of generator and relations using the notion of double exact sequences. Motivated by \cite{Nenasheb},  Grayson defines higher $K$-groups in terms of generators and relations using binary complexes (see \cite{Gray}). However, Nenashev's  $K_{1}$-group agree with Grayson's $K_{1}$-group (see Corollary 4.2 of \cite{KKW}). In the rest of the paper, we assume the following as the definition of higher $K$-groups.

\begin{definition}\label{main def}(see Corollary 7.4 of \cite{Gray}) Let $\mathcal{N}$ be an exact category. For $n\geq 1,$ $K_{n}\mathcal{N}$ is the abelian group having generators $[N],$ one for each object $N$ of $(B^{q})^{n}\mathcal{N}$ and the relations are:
\begin{enumerate}
 \item $[N^{'}] + [N^{''}]=[N]$ for every short exact sequence $0 \to N^{'} \to N \to N^{''}\to 0$ in $(B^{q})^{n}\mathcal{N};$
 \item $[T]=0$ if $T$ is a diagonal acyclic binary multi-complex.
\end{enumerate}
\end{definition}
The above definition can be interpreted as a quotient group which we record in \\Remark \ref{k grp as a qt} but before that, we define a subgroup that will be used frequently.
\begin{definition}(See Definition 1.39 of \cite{HarrisT})\label{Dc}
   An acyclic binary multi-complex \\ $(N_{*}, (d^{1}, \tilde{d^{1}}), (d^{2}, \tilde{d^{2}}), \dots, (d^{n}, \tilde{d^{n}}))$ over an exact category $\mathcal{N}$ is {\it diagonal} if $d^{i}=\tilde{d^{i}}$ for at least one $i,$ i.e. it is in the image of the  functor $$\Delta^i : C^q(B^{q})^{n-1}\mathcal{N} \rightarrow (B^{q})^{n}\mathcal{N}$$ that duplicates the differential in the $i^{th}$ direction for some $1 \leq i \leq n+1.$  Let $T_{\mathcal{N}}^{n}$ denote the subgroup of $K_{0}(B^{q})^{n}\mathcal{N}$ generated by the classes of the diagonal acyclic binary multi-complexes in $K_{0}(B^{q})^{n}\mathcal{N}.$ 
    
\end{definition}
\begin{remark}\label{k grp as a qt}{\rm
 In Definition \ref{main def}, if we only consider the relation (1) then it is just $K_{0}(B^{q})^{n}\mathcal{N}.$ Then $K_{n}\mathcal{N}\cong K_{0}(B^{q})^{n}\mathcal{N}/T_{\mathcal{N}}^{n}.$}
\end{remark}

\begin{lemma}\label{harris observation}(see Lemma 2.7 of \cite{Harris}) For each $n\geq 1$, there is a split short exact sequence 
$$0 \to K_{n-1}C^{q}\mathcal{N}\stackrel{\Delta}\to  K_{n-1}B^{q}\mathcal{N} \to K_{n}\mathcal{N}\to 0,$$ which is functorial in $\mathcal{N}.$ 
 \end{lemma}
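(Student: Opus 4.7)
The plan is to realize the sequence as a direct consequence of the quotient description of $K$-theory from Remark \ref{k grp as a qt}, together with the existence of a natural retraction of the diagonal functor $\Delta: C^q \mathcal{N} \to B^q \mathcal{N}$ coming from ``forgetting the second differential.''

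First I would identify the middle and right terms in $K_0$-language. Using the identification $(B^q)^{n-1}(B^q \mathcal{N}) = (B^q)^n \mathcal{N}$, one may write
$$K_{n-1}B^q \mathcal{N} \;=\; K_0 (B^q)^n \mathcal{N} / T^{n-1}_{B^q \mathcal{N}}, \qquad K_n \mathcal{N} \;=\; K_0 (B^q)^n \mathcal{N} / T^n_{\mathcal{N}}.$$
Labeling the $n$ coordinate directions of $(B^q)^n \mathcal{N}$ so that the innermost $B^q$ corresponds to direction $1$, the subgroup $T^{n-1}_{B^q\mathcal{N}}$ is generated by multicomplexes diagonal in some direction $i \in \{2,\dots,n\}$, whereas $T^n_{\mathcal{N}}$ allows $i \in \{1,\dots,n\}$. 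Since $T^{n-1}_{B^q\mathcal{N}} \subseteq T^n_{\mathcal{N}}$, the rightmost map is the canonical surjection, and its kernel is the image in $K_{n-1}B^q\mathcal{N}$ of classes of multicomplexes diagonal in direction $1$.

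Next I would construct the leftmost map. Applying the exact functor $(B^q)^{n-1}$ to the diagonal functor $\Delta: C^q\mathcal{N} \to B^q\mathcal{N}$ gives an exact functor $(B^q)^{n-1}C^q\mathcal{N} \to (B^q)^n\mathcal{N}$ whose image consists precisely of multicomplexes with $d^1 = \tilde{d}^1$. By Remark \ref{k grp as a qt} this descends to a map $\Delta_* : K_{n-1}C^q\mathcal{N} \to K_{n-1}B^q\mathcal{N}$, and the description above identifies $\operatorname{im}\Delta_*$ with the kernel of the surjection to $K_n\mathcal{N}$, establishing exactness at the middle term.

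The remaining and most subtle task is injectivity together with the splitting. For this I would use the exact forgetful functor $\pi: B^q\mathcal{N} \to C^q\mathcal{N}$ defined by $(N_*,d,d') \mapsto (N_*,d)$, which is well defined since acyclicity of $(N_*,d,d')$ implies acyclicity of $(N_*,d)$, and which preserves short exact sequences because exactness in $B^q\mathcal{N}$ is componentwise. Applying $(B^q)^{n-1}$ to $\pi$ and passing to $K_0$, one checks that $T^{n-1}_{B^q\mathcal{N}}$ maps into $T^{n-1}_{C^q\mathcal{N}}$, since $\pi$ acts only in direction $1$ and thus preserves diagonality in any of directions $2, \dots, n$; hence one obtains an induced map $\pi_*: K_{n-1}B^q\mathcal{N} \to K_{n-1}C^q\mathcal{N}$. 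Functoriality of $K_{n-1}$ together with $\pi \circ \Delta = \mathrm{id}_{C^q\mathcal{N}}$ yields $\pi_* \circ \Delta_* = \mathrm{id}$, which simultaneously proves injectivity of $\Delta_*$ and supplies the required splitting. Functoriality in $\mathcal{N}$ comes for free from the evident naturality of $\Delta$, $\pi$, and the iterates $B^q$ and $C^q$. The step that demands the most care, and which I expect to be the main obstacle, is the bookkeeping that identifies $\operatorname{im}\Delta_*$ inside $K_{n-1}B^q\mathcal{N}$ with the full kernel of the quotient to $K_n\mathcal{N}$; everything else is formal exactness of functors and naturality.
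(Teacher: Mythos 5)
Your argument is correct and is essentially the proof the paper points to: the paper itself gives no argument beyond citing Lemma 2.7 of \cite{Harris}, and that proof is exactly your combination of Grayson's quotient presentation $K_m\mathcal{M}\cong K_0(B^q)^m\mathcal{M}/T^m_{\mathcal{M}}$ (so that the right-hand map is the further quotient by the classes diagonal in the inner direction, which is precisely $\operatorname{im}\Delta_*$) with the retraction induced by the exact functor forgetting the second differential in that direction. The only point worth adding is the degenerate case $n=1$, where Remark \ref{k grp as a qt} does not literally apply to $K_0$; interpreting $T^0$ as the trivial subgroup, so that $K_0B^q\mathcal{N}$ and $K_0C^q\mathcal{N}$ are plain Grothendieck groups, your argument goes through verbatim there as well.
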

 \begin{proof}
  See Lemma 2.7 of \cite{Harris}.
 \end{proof}
\begin{remark}\label{further observ to harris}{\rm
 For $n=1,$ $ K_{0}C^{q}\mathcal{N}\cong {\rm im}(\Delta)=T_{\mathcal{N}}^{1}.$ Thus, the above lemma implies that $K_{0}B^{q}\mathcal{N}\cong T_{\mathcal{N}}^{1} \oplus K_{1}\mathcal{N}.$ }
 
 \end{remark}
 Before proceeding further, let us fix some notations. Throughout the rest of the article $({B^q})^{n}_{[0,2]}\mathcal N$ will denote binary multi-complexes that are supported on $[0,2]$ in every direction, similarly we denote ${(C^q)}^n\mathcal N.$
 Interestingly, Kasprowski and Winges pointed out that it is sufficient to consider the quotient of $K_0(({B^q})^n_{[0,2]}\mathcal N$ in order to define the higher $K$-groups (see \cite{WK} Theorem 1.3 and Theorem 1.4). In this article, all the generators used in all the computations will be assumed over the category $({B^q})^n_{[0,2]}$. 
\section{\rm{Nil}$K$-groups and their Generators}
Let $R$ be a commutative ring with unity. The category of finitely generated projective $R$-modules is denoted by ${\bf P}(R).$ Let ${\bf Nil}(R)$ denote the category whose objects are pairs $(P, \nu),$ where $P$ is a finitely generated projective $R$-module and $\nu$ is a nilpotent endomorphism of $P.$ A morphism $f: (P_{1}, \nu_{1}) \to (P_{2}, \nu_{2})$ is a $R$-module map $f: P_{1} \to P_{2}$ such that $f\nu_{1}=\nu_{2}f.$ Both ${\bf P}(R)$ and ${\bf Nil}(R)$ are exact categories.\\
There is always a canonical forgetful functor from $\bf Nil$$(R)$ to $\bf{P}$$(R)$ that induces the following map between $K_0({\bf Nil}(R)) \rightarrow K_0(R)$; $[P, \nu] \mapsto [P]$. This is a split surjection and thus, $K_0({\bf Nil}(R)) \cong K_0(R) \oplus \rm Nil_0(R).$ Since $K_n$ is a functor, in general we have $K_n({\bf Nil}(R)) \cong K_n(R) \oplus {\rm{Nil}}_n(R).$\\
Because $NK_n(R) \cong {\rm{Nil}_{n-1}}(R)$, in order to show that $K_n(R) \cong K_n(R[s])$; for $n>0$, it is both necessary and sufficient that we show ${\rm{Nil}}_n(R)$; for all $n>0$ are trivial for $n \geq 0.$ In this section, first we recall the generators of ${\rm{Nil}}_n(R)$ and so 
\begin{theorem}[\cite{BS24}~Theorem 5.6]
    The group ${\rm Nil}_{n}(R)$ is generated by elements of the form \small$$([(F_{*}, (f_{1}, f_{1}^{'}), (f_{2}, f_{2}^{'}), \dots, (f_{n}, f_{n}^{'}), \nu)]- [(F_{*}, (f_{1}, f_{1}^{'}), (f_{2}, f_{2}^{'}), \dots, (f_{n}, f_{n}^{'}), 0)])(~{\rm mod} ~\widetilde{T}_{R}^{n}),$$ \normalsize where $(F_{*}, (f_{1}, f_{1}^{'}), (f_{2}, f_{2}^{'}), \dots, (f_{n}, f_{n}^{'}), \nu)), (F_{*}, (f_{1}, f_{1}^{'}), (f_{2}, f_{2}^{'}), \dots, (f_{n}, f_{n}^{'}), 0))$ are objects of $(B^{q})^{n}{\bf Nil(Free}(R)).$
\end{theorem}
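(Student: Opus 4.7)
The plan is to prove the theorem in three stages, organized around Grayson's presentation of higher $K$-groups, the identification of $\mathrm{Nil}_n(R)$ with a kernel, and a stabilization trick promoting projective entries to free entries.

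First, I would invoke Grayson's description (Definition \ref{main def} together with Remark \ref{k grp as a qt}): $K_n(\mathbf{Nil}(R)) \cong K_0((B^q)^n \mathbf{Nil}(R))/T^n_{\mathbf{Nil}(R)}$. The forgetful functor $f: \mathbf{Nil}(R) \to \mathbf{P}(R)$ is exact and admits the zero-endomorphism section $\sigma: P \mapsto (P,0)$, also exact. These functors extend to $(B^q)^n$ and induce a splitting on $K$-groups of binary multi-complexes compatible with the diagonal subgroups. Under this splitting, $\mathrm{Nil}_n(R)$ is exactly the kernel of the induced forgetful map. Consequently it is generated, modulo $T^n_{\mathbf{Nil}(R)}$, by differences $[N] - [(\sigma \circ f)(N)]$ where $N = (P_*, (d^1, \tilde d^1), \ldots, (d^n, \tilde d^n), \nu)$ ranges over acyclic binary $n$-multi-complexes in $\mathbf{Nil}(R)$; explicitly these differences take the form $[(P_*, (d^i,\tilde d^i)_i, \nu)] - [(P_*, (d^i,\tilde d^i)_i, 0)]$.

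Second, I would promote the projective entries $P_*$ to free entries. For each slot in the underlying $\mathbb{Z}^n$-graded collection, choose a projective complement $Q$ so that $P \oplus Q$ is free in each slot; this is possible because every finitely generated projective $R$-module is a summand of a finitely generated free one. The key point is to extend the collection $\{Q\}$ to an acyclic binary $n$-multi-complex with zero nilpotent endomorphism whose class lies in the enlarged diagonal subgroup $\widetilde{T}^n_R$. Since a bounded acyclic complex of projectives is split, one can assemble $Q_*$ with identity differentials in at least one coordinate direction, making the resulting multi-complex diagonal in that direction; it therefore vanishes in the quotient. Adding this extension to both $N$ and to its zero-endomorphism shadow yields the required representative $[(F_*, (f^i, f^{i'})_i, \nu \oplus 0)] - [(F_*, (f^i, f^{i'})_i, 0)]$ with all entries free.

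The main obstacle is the second step: the stabilization has to be carried out coherently across all $n$ directions of the multi-complex, and it must remain compatible with both binary differentials, acyclicity, and the nilpotent endomorphism simultaneously. I expect to handle this by induction on $n$, using the split short exact sequence of Lemma \ref{harris observation} to reduce the $n$-dimensional case to the $(n-1)$-dimensional one, and invoking at each step the standard projective-complement fact. The verifications that $P_* \oplus Q_*$ inherits compatible pairs of differentials in each direction and that $\nu \oplus 0$ remains nilpotent and commutes with all of them are then routine, yielding generators of the claimed form in $(B^q)^n \mathbf{Nil}(\mathrm{Free}(R))$.
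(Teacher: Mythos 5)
The paper does not actually prove this statement: it is imported from \cite{BS24} (Theorem 5.6), with only the three-by-three diagram defining $\widetilde{T}^n_R$ displayed. Your first step reconstructs exactly what that diagram encodes: the forgetful functor $f$ and its zero-section $\sigma$ are exact, extend to $(B^q)^n$, preserve diagonal multi-complexes, and hence split $K_n\mathbf{Nil}(R)\to K_n(R)$ compatibly with the subgroups $T^n$; since $x\mapsto x-\sigma_*f_*(x)$ is a surjective homomorphism onto the kernel, $\mathrm{Nil}_n(R)$ is generated by the images of the generators $[N]$, i.e.\ by the differences $[N]-[(\sigma f)(N)]$. That part is correct and matches the route the paper points to.

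The gap is in your second step, the reduction from projective to free entries. Your concrete prescription --- choose a slotwise complement $Q$ and ``assemble $Q_*$ with identity differentials in at least one coordinate direction'' --- does not work as stated: an elementary contractible complex $0\to Q\xrightarrow{1}Q\to 0$ placed so as to repair freeness at one slot necessarily deposits a copy of $Q$ at an adjacent slot, where (free)$\,\oplus\,$(projective) need not be free, so the corrections cascade and the choice at one position destroys freeness at its neighbours. What actually saves the argument (and what you mention but do not exploit) is that a bounded acyclic complex of projectives splits: for support $[0,2]$ one takes $Q_2,Q_0$ with $P_2\oplus Q_2$ and $P_0\oplus Q_0$ free and sets $Q_1:=Q_2\oplus Q_0$, which is automatically a complement of $P_1\cong P_2\oplus P_0$; this must then be carried out coherently in all $n$ directions and compatibly with both differentials and with $\nu$, which is precisely the content you defer to ``routine.'' The cleaner standard packaging is cofinality: $\mathbf{Nil}(\mathbf{Free}(R))\subseteq\mathbf{Nil}(R)$ is cofinal, so the induced map on $K_n$ is an isomorphism for $n\geq 1$ and the free-entry generators follow at once. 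As written, your second step is a plan rather than a proof, and the one explicit construction it offers is not the right one.
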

The subgroup $~\widetilde{T}_{R}^{n}$ arises from chasing the following commutative diagram (see \cite{BS24}~Lemma 3.4 and Section 3).
 $$ \begin{CD}
        @.  0 @. 0 @.0  \\
           @. @VVV  @VVV  @VVV  \\
         0 @>>> \widetilde{T}_{R}^{n} @>>> T_{{\bf Nil}(R)}^{n} @>>> T_{{\bf P}(R)}^{n}@>>> 0 \\
            @.  @VVV    @VVV   @VVV \\
        0 @>>> \widetilde{{\rm Nil}}_{n}(R) @>>> K_{0}(B^{q})^{n} {\bf Nil}(R) @>>> K_{0}(B^{q})^{n} {\bf P}(R) @>>> 0 \\
             @. @VVV   @VVV    @VVV  \\
        0 @>>> {\rm Nil}_{n}(R) @>>>K_{n}{\bf Nil}(R) @>>> K_{n}(R) @>>> 0 \\
         @.  @VVV   @VVV  @VVV  \\
        @. 0 @. 0 @. 0,
       \end{CD}$$

To make the reader familiarize with the generators written above we give a brief description of them when $n =0$ and $1$. 
    \begin{enumerate}
        \item $\rm{Nil}_0(R)$ is generated by the class of elements of the form $[R^n,\nu]-[R^n,0].$
        \item $\rm{Nil}_1(R)$ is generated by the class of elements of the form $[F_*,d,d',\nu] - [F_*,d,d',0]$. As stated before, we would be working with binary complexes supported on $[0,2]$. Thus, $(F_*,d,d',\nu) \in B^q_{[0,2]}{\bf Nil(Free}(R))$ has an expression as the following:
\begin{center}
\begin{tikzcd}
  0 \ar[r, yshift=2pt]\ar[r, yshift=-2pt] &R^n \ar[r, "d_2", yshift=2pt]\ar[r, "d'_2"', yshift=-2pt]\ar[d, "\nu_2"] &R^m \ar[r, "d_1", yshift=2pt]\ar[r, "d'_1"', yshift=-2pt]\ar[d, "\nu_1"] &R^k \ar[r, yshift=2pt]\ar[r, yshift=-2pt]\ar[d, "\nu_0"] & 0\\
   0 \ar[r, yshift=2pt]\ar[r, yshift=-2pt] &R^n \ar[r, "d_2", yshift=2pt]\ar[r, "d'_2"', yshift=-2pt] &R^m \ar[r, "d_1", yshift=2pt]\ar[r, "d'_1"', yshift=-2pt] &R^k \ar[r, yshift=2pt]\ar[r, yshift=-2pt] & 0
\end{tikzcd}
\end{center}
where the vertical nilpotent endomorphisms commute with both the differentials.
\item For ${\rm{Nil}}_n(R)$ the above diagram is iterated. 
    \end{enumerate}
In \cite{BS24} we had already proven that ${\rm Nil}_0(R)$ is trivial for $R$ being a Pr\"{u}fer domain. We recall the statement here, since we are going to prove the genralized statement later in Section $5$.
\begin{theorem}[\cite{BS24}~Theorem 3.2]\label{nil0}
     Let $R$ be a commutative ring with unity. Assume that every finitely generated torsion-free $R$-module is projective. Then ${\rm Nil}_{0}(R)=0.$
\end{theorem}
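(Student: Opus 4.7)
The plan is to induct on the nilpotency index $k$ of $\nu$. Recall that ${\rm Nil}_0(R)$ is generated by elements $[(P,\nu)] - [(P,0)]$ for $(P,\nu)$ in ${\bf Nil}(R)$, so it suffices to prove that $[(P,\nu)] = [(P,0)]$ in $K_0({\bf Nil}(R))$ whenever $\nu^k = 0$. The base case $k = 1$ is immediate since then $\nu = 0$.

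For the inductive step, I would set $N := \nu(P)$ and $K := \ker(\nu)$. As a submodule of the projective (hence torsion-free) module $P$, the image $N$ is torsion-free; and it is finitely generated, being spanned by the images under $\nu$ of a finite generating set of $P$. The hypothesis then yields that $N$ is finitely generated projective. This is the only place the assumption on $R$ is used, and is the heart of the argument.

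The first isomorphism theorem gives $P/K \cong N$, so the short exact sequence $0 \to K \to P \to N \to 0$ splits by projectivity of $N$; in particular $K$ is also finitely generated projective. Moreover $\nu$ restricts to an endomorphism of $N$ since $\nu(N) = \nu^2(P) \subset N$, and for any $y = \nu(x) \in N$ one has $(\nu|_N)^{k-1}(y) = \nu^k(x) = 0$, so $\nu|_N$ has strictly smaller nilpotency index.

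Fitting these pieces together produces two short exact sequences in ${\bf Nil}(R)$,
\[
0 \to (K,0) \to (P,\nu) \to (N, \nu|_N) \to 0 \quad\text{and}\quad 0 \to (K,0) \to (P,0) \to (N,0) \to 0,
\]
and passing to $K_0({\bf Nil}(R))$ and subtracting yields
\[
[(P,\nu)] - [(P,0)] = [(N, \nu|_N)] - [(N, 0)],
\]
which vanishes by the inductive hypothesis applied to $(N,\nu|_N)$. The main obstacle is really just the single step of concluding that the image $N$ is projective; once that is in hand, the rest is a routine manipulation of short exact sequences in $K_0$.
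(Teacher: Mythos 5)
Your proof is correct, and it isolates exactly the right pivot: the hypothesis enters only to show that a certain finitely generated torsion-free subquotient of $P$ is projective, so that the relevant short exact sequences actually live in ${\bf Nil}(R)$. However, your decomposition is the mirror image of the one used in the paper (and in the cited proof from \cite{BS24}). The paper filters $P=R^n$ by the ascending chain $0\subseteq\ker(\nu)\subseteq\ker(\nu^2)\subseteq\cdots\subseteq\ker(\nu^m)=P$, applies the hypothesis to the graded pieces $\ker(\nu^{i+1})/\ker(\nu^i)$ (torsion-free because $\nu^i(x)$ lies in the torsion-free module $P$), and peels off the \emph{top} quotient $(\ker(\nu^m)/\ker(\nu^{m-1}),0)$, which carries the zero endomorphism, leaving the sub-object $(\ker(\nu^{m-1}),\nu)$ with smaller nilpotency index. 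You instead peel off the \emph{bottom} piece $(\ker(\nu),0)$ as a sub-object and push the nilpotent onto the quotient $(\nu(P),\nu|_{\nu(P)})\cong(P/\ker(\nu),\bar\nu)$, applying the hypothesis to the image $\nu(P)$ rather than to a kernel quotient. Both arguments induct on the nilpotency index and both produce the same cancellation $[(P,\nu)]-[(P,0)]=[(Q,\mu)]-[(Q,0)]$ with $\mu$ of smaller index; yours is marginally more economical in that it only ever invokes the hypothesis for the single module $\nu(P)$ at each stage, whereas the paper's filtration sets up the whole associated graded at once --- a formulation that the author then reuses verbatim in Section 5 when the same splitting has to be performed compatibly in several directions of a binary multi-complex. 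So your route is fine for ${\rm Nil}_0$, but the kernel-filtration version is the one that generalizes directly to the higher ${\rm Nil}_n$ argument.
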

\begin{proof}
    See \cite{BS24}, Remark $3.3$ and the proof of  Lemma $3.1$. 
\end{proof}

\section{Main Results}
Now that we have set the background for our work, in this section,  we generalize Theorem \ref{nil0}. But first, we need to understand the behaviour of the generators of the group ${\rm Nil}_1(R),$ whenever $R$ is a Pr{\"u}fer domain. To prove the triviality of ${\rm Nil_1}(R)$ it suffices to show that $[F_*,d,d',\nu] = [F_*,d,d',0]$. Let $(R^n, \nu)$ be an object of ${\bf Nil}(R),$ and assume that $\nu$ is non-zero nilpotent. Then, there exist a least $m\in \mathbb{N}$ such that $\nu^{m}=0$ and $\nu^{r}\neq 0$ for $r< m.$ Then we have a chain of $R$-modules
\begin{equation}\label{kerem}
0 \subseteq \ker(\nu)\subseteq \ker(\nu^{2}) \subseteq \dots \subseteq \ker(\nu^{n-1})\subseteq \ker(\nu^{n})=R^{n}.
\end{equation}
Since  $\frac{\ker(\nu^{i+1})}{ker(\nu^{i})} $ is a torsion free $R$-module for $1\leq i \leq m-1$, they are projective as well because they are finitely generated (see \cite{BS24}, Lemma 3.1).\\
Now, let us recall the expression of $(F_*,d,d',\nu)$ is of the form 
\begin{equation}
\begin{tikzcd}
  0 \ar[r, yshift=2pt]\ar[r, yshift=-2pt] &R^n \ar[r, "d_2", yshift=2pt]\ar[r, "d'_2"', yshift=-2pt]\ar[d, "\nu_2"] &R^m \ar[r, "d_1", yshift=2pt]\ar[r, "d'_1"', yshift=-2pt]\ar[d, "\nu_1"] &R^k \ar[r, yshift=2pt]\ar[r, yshift=-2pt]\ar[d, "\nu_0"] & 0\\
   0 \ar[r, yshift=2pt]\ar[r, yshift=-2pt] &R^n \ar[r, "d_2", yshift=2pt]\ar[r, "d'_2"', yshift=-2pt] &R^m \ar[r, "d_1", yshift=2pt]\ar[r, "d'_1"', yshift=-2pt] &R^k \ar[r, yshift=2pt]\ar[r, yshift=-2pt] & 0
\end{tikzcd}.
\end{equation}
Thus, using the nilpotent endomorphisms in each column, we get the following split exact sequence of short exact sequences 
\begin{equation}\label{diagram}
    \begin{tikzcd}
  0 \ar[r, yshift=2pt]\ar[r, yshift=-2pt] &{(\ker(\nu_2^{m-1}),\nu_2)} \ar[r, "d_2", yshift=2pt]\ar[r, "d'_2"', yshift=-2pt]\ar[d] &{(\ker(\nu_1^{n-1}),\nu_1)} \ar[r, "d_1", yshift=2pt]\ar[r, "d'_1"', yshift=-2pt]\ar[d] &{(\ker(\nu_0^{k-1}),\nu_0)} \ar[r, yshift=2pt]\ar[r, yshift=-2pt]\ar[d] & 0\\
   0 \ar[r, yshift=2pt]\ar[r, yshift=-2pt] &R^n \ar[r, "d_2", yshift=2pt]\ar[r, "d'_2"', yshift=-2pt]\ar[d] &R^m \ar[r, "d_1", yshift=2pt]\ar[r, "d'_1"', yshift=-2pt]\ar[d] &R^k \ar[r, yshift=2pt]\ar[r, yshift=-2pt]\ar[d] & 0\\
    0 \ar[r, yshift=2pt]\ar[r, yshift=-2pt] &(\frac{\ker(\nu_2^{n})}{\ker(\nu_2^{n-1})}, 0) \ar[r, "d_2", yshift=2pt]\ar[r, "d'_2"', yshift=-2pt] &(\frac{\ker(\nu_1^{m})}{\ker(\nu_1^{m-1})}, 0) \ar[r, "d_1", yshift=2pt]\ar[r, "d'_1"', yshift=-2pt] &(\frac{\ker(\nu_2^{k})}{\ker(\nu_2^{k-1})}, 0) \ar[r, yshift=2pt]\ar[r, yshift=-2pt] & 0.
\end{tikzcd}
\end{equation}
Because nilpotent endomorphisms can have different nilpotent indexes, let us see what would happen if, for instance, the nilpotent index of $\nu_2$ is less than the nilpotent index of $\nu_1.$ Then, in the above diagram, we can choose the minimum of the nilpotent indices among $\nu_2, \nu_1$ and curate the diagram with appropriate adjustment. In general, we would first establish what the ${\rm min}\{r,s,t\}$ (nilpotent indices of \{$\nu_1,\nu_2,\nu_3\}$  respectively) is and then adjust the first row of the above diagram \ref{diagram} accordingly. To put it simply, if $r$ is the minimum, then the first row would be adjusted as follows:
\begin{equation}
    \begin{tikzcd}
        0 \ar[r, yshift=2pt]\ar[r, yshift=-2pt] &{(\ker(\nu_2^{r-1}),\nu_2)} \ar[r, "d_2", yshift=2pt]\ar[r, "d'_2"', yshift=-2pt] &{(\ker(\nu_1^{r-1}),\nu_1)} \ar[r, "d_1", yshift=2pt]\ar[r, "d'_1"', yshift=-2pt] &{(\ker(\nu_0^{r-1}),\nu_0)} \ar[r, yshift=2pt]\ar[r, yshift=-2pt] & 0.
    \end{tikzcd}
\end{equation}
Since, kernel of nilpotents has a nice embedding (see \ref{kerem}), apart from the first row of \ref{diagram} remains unchanged, and it remains a split exact sequence of short exact sequences. Thus by denoting the first row of the diagram as $(\ker^r_*,d,d',\nu)$ and the final row as $(\coker_*,d,d',0)$, we arrive at the following equation in the Grothendieck group, i.e.
\begin{align*}
[F_*, d,d',\nu]&= [\ker_*,d,d',\nu]+[\coker_*,d,d',0]
 \end{align*}
Now in a similar fashion, we can further deduce $$[\ker_*,d,d',\nu] = [\ker^{r-1}_*,d,d',\nu]+[\coker^r_*,d,d',0].$$ Thus, what we find is an iteration of the argument that we had produced as the proof of \cite{BS24}, Theorem 3.2. So we conclude that $[F_*,d,d',\nu] = [F_*,d,d',0]$. This discussion thus implies the following.
\begin{theorem}\label{nil1}
    If $R$ is a Pr\"{u}fer domain then ${\rm{Nil}}_1(R)$is trivial. 
\end{theorem}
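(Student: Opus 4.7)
The plan is to lift the module-level filtration argument behind Theorem \ref{nil0} to the level of binary complexes. A generator of ${\rm Nil}_1(R)$ has the shape $[F_*,d,d',\nu]-[F_*,d,d',0]$ modulo $\widetilde{T}_R^1$, with $(F_*,d,d',\nu)\in (B^q)^1_{[0,2]}{\bf Nil}({\bf Free}(R))$; the goal is to show that this difference is trivial. The key numerical invariant is $p = \min\{r, s, t\}$, where $r, s, t$ are the nilpotent indices of the column endomorphisms $\nu_2, \nu_1, \nu_0$.

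The main step is to combine the kernel filtration \eqref{kerem} with item (3) of Section 2 --- finitely generated torsion-free modules over a Pr\"{u}fer domain are projective --- to conclude that each $\ker(\nu_i^{\,p-1})$ and each quotient $F_i / \ker(\nu_i^{\,p-1})$ is a finitely generated projective $R$-module. Since $\nu$ commutes with $d$ and $d'$, their powers do too, so the differentials restrict to the kernel submodules and descend to the quotients, yielding a short exact sequence
\begin{equation*}
0 \to (\ker_*^{\,p-1}, d, d', \nu) \to (F_*, d, d', \nu) \to (\coker_*, d, d', 0) \to 0
\end{equation*}
in $(B^q)^1_{[0,2]}{\bf Nil}({\bf P}(R))$, together with an analogous sequence in which $\nu$ is replaced by $0$. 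Subtracting the two sequences in the Grothendieck group cancels the common cokernel term (its endomorphism is $0$ on both sides) and reduces the original generator to $[\ker_*^{\,p-1}, d, d', \nu] - [\ker_*^{\,p-1}, d, d', 0]$. Because the nilpotent index of $\nu$ on $\ker(\nu^{\,p-1})$ is strictly less than $p$, iterating this construction finitely many times drives the endomorphism to zero and kills the generator modulo $\widetilde{T}_R^1$.

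The main obstacle is verifying that each kernel subcomplex and each cokernel quotient complex is genuinely an acyclic binary complex in the sense of Section 3, i.e.\ that the acyclic factorizations $N_i \twoheadrightarrow Z_{i-1} \hookrightarrow N_{i-1}$ both restrict to and descend along the Pr\"{u}fer filtration. Degree by degree this reduces to the splitting of short exact sequences of finitely generated projective modules over $R$, which the Pr\"{u}fer hypothesis guarantees. A secondary bookkeeping issue is that the intermediate objects land in ${\bf Nil}({\bf P}(R))$ rather than ${\bf Nil}({\bf Free}(R))$, but the two exact categories have naturally isomorphic $K$-theories and the difference contributes only diagonal classes, which vanish modulo $\widetilde{T}_R^1$. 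Once these technicalities are dealt with, the induction on $p$ carries the argument through exactly as in the $n=0$ case.
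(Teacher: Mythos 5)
Your proposal follows essentially the same route as the paper's own argument: filter each column by kernels of powers of $\nu$, invoke the Pr\"{u}fer hypothesis to make the graded pieces finitely generated torsion-free and hence projective, split off the piece carrying the zero endomorphism via a short exact sequence of acyclic binary complexes, and induct on the nilpotent index. You also supply two checks that the paper elides, namely the acyclicity of the kernel sub- and cokernel quotient complexes and the passage from ${\bf Nil}({\bf Free}(R))$ to ${\bf Nil}({\bf P}(R))$.

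One concrete point, which your write-up happens to share with the paper's: the uniform exponent should be the \emph{maximum} of the nilpotent indices of $\nu_2,\nu_1,\nu_0$, not the minimum. If $p=\min\{r,s,t\}$ and some $\nu_j$ has index $s>p$, then $\nu_j^{\,p}\neq 0$, so $\nu_j(F_j)\not\subseteq\ker(\nu_j^{\,p-1})$ and the induced endomorphism on $F_j/\ker(\nu_j^{\,p-1})$ is nonzero; the quotient row is then not of the form $(\coker_*,d,d',0)$ and does not cancel against the $\nu=0$ side as you claim. Taking $q=\max\{r,s,t\}$ repairs this: $\nu_i^{\,q}=0$ for every $i$, so all induced maps on the quotients vanish; the differentials still restrict to $\ker(\nu_i^{\,q-1})$ because the exponent is uniform and $d\nu=\nu d$; and the restricted endomorphisms have index at most $q-1$, so the induction terminates. (Alternatively one can split off $(\ker\nu_i,0)$ at the bottom of the filtration and induct on the quotient.) With that substitution your argument goes through and coincides with the paper's.
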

Now to show that ${\rm Nil}_n(R)$ is trivial it would suffice to show that  \small$$[(F_{*}, (f_{1}, f_{1}^{'}), (f_{2}, f_{2}^{'}), \dots, (f_{n}, f_{n}^{'}), \nu)]- [(F_{*}, (f_{1}, f_{1}^{'}), (f_{2}, f_{2}^{'}), \dots, (f_{n}, f_{n}^{'}), 0)] \in ~\widetilde{T}_{R}^{n}.$$ The generator $[(F_{*}, (f_{1}, f_{1}^{'}), (f_{2}, f_{2}^{'}), \dots, (f_{n}, f_{n}^{'}), \nu)] \in ({B^q})^n{\bf P}(R)$, so in each direction there is a splitting as we have established in the proof of Theorem \ref{nil1}. All the splittings in each of those directions are compatible with all pairs of morphisms $(f_i,f_{i}^{'})$. For example working with generators in $({B^q})^2{\bf Nil}R$ we generate a split exact sequence in two directions. What we mean by that is, if we work with the generators of the following form, where $R^p:= (R^p, \nu_p)$:
\begin{equation}
\begin{tikzcd}
   0 \ar[r, yshift=2pt]\ar[r, yshift=-2pt] &R^{n'} \ar[r,  yshift=2pt]\ar[r,  yshift=-2pt]\ar[d,  xshift=2pt]\ar[d,  xshift=-2pt] &R^{m'} \ar[r, yshift=2pt]\ar[r,  yshift=-2pt] \ar[d,  xshift=2pt] \ar[d,  xshift=-2pt]&R^{k'} \ar[r, yshift=2pt]\ar[r, yshift=-2pt]\ar[d,  xshift=2pt]\ar[d,  xshift=-2pt] & 0\\
  0 \ar[r, yshift=2pt]\ar[r, yshift=-2pt] &R^n \ar[r,  yshift=2pt]\ar[r,  yshift=-2pt]\ar[d,  xshift=2pt]\ar[d,  xshift=-2pt] &R^m \ar[r, yshift=2pt]\ar[r, yshift=-2pt] \ar[d,  xshift=2pt] \ar[d,  xshift=-2pt]&R^k \ar[r, yshift=2pt]\ar[r, yshift=-2pt]\ar[d,  xshift=2pt]\ar[d,  xshift=-2pt] & 0\\
   0 \ar[r, yshift=2pt]\ar[r, yshift=-2pt] &R^{n^{''}}\ar[r,  yshift=2pt]\ar[r,  yshift=-2pt] &R^{m^{''}} \ar[r,  yshift=2pt]\ar[r,  yshift=-2pt] &R^{k^{''}} \ar[r, yshift=2pt]\ar[r, yshift=-2pt] & 0
\end{tikzcd}.
\end{equation}
Then the exact splitting (see \ref{diagram}) of all the columns have to be compatible with all the columns as well as all the rows, thus the argument that we had produced for ${\rm Nil}_1(R)$ gets iterated in two directions, i.e. we construct two compatible split exact sequences of columns and rows with the same graded entries. We call this a ``net". So working with the generators in $({B^q})^n$ we iterate the same construction and get an $n$-directional net.\\
Hence, we are able to state the following,
\begin{theorem}
    If $R$ is a Pr{\"u}fer domain then ${\rm Nil}_n(R)$ is trivial for all $n\geq 0.$
\end{theorem}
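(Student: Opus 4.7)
The plan is to extend the argument used for Theorem \ref{nil1} to arbitrary $n$. Fix a generator of ${\rm Nil}_n(R)$, namely an element of the form
\[
[(F_*, (f_1, f_1'), \ldots, (f_n, f_n'), \nu)] - [(F_*, (f_1, f_1'), \ldots, (f_n, f_n'), 0)]
\]
in the quotient $\widetilde{\rm Nil}_n(R)/\widetilde{T}_R^n$. I would show that the two terms already coincide in $K_0((B^q)^n {\bf Nil}({\rm Free}(R)))$, which forces the generator to vanish in ${\rm Nil}_n(R)$.

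The key observation is that $\nu$ is a morphism in the multi-complex category and therefore commutes with each of the $2n$ differentials $f_i, f_i'$. Consequently, for every $j \geq 0$ the sub-graded module $\ker(\nu^j) \subseteq F_*$ is stable under all differentials and inherits the structure of a sub-binary $n$-multi-complex. Letting $m$ denote the nilpotent index of $\nu$, the chain
\[
0 \subset \ker(\nu) \subset \ker(\nu^2) \subset \cdots \subset \ker(\nu^m) = F_*
\]
is a filtration of $(F_*, (f_i, f_i'))$ by sub-multi-complexes. The Pr\"{u}fer hypothesis guarantees that each successive quotient $\ker(\nu^{j+1})/\ker(\nu^j)$ is a finitely generated torsion-free, hence projective, $R$-module in every grade, so the grade-wise short exact sequences split. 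Stability of the filtration under all $(f_i, f_i')$ combined with these grade-wise splittings produces admissible short exact sequences of acyclic binary $n$-multi-complexes in every direction simultaneously --- the ``net'' structure alluded to just before the theorem. The nilpotent induced on each subquotient is zero, since $\nu$ lowers the filtration by one step.

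Iterating this filtration yields, in $K_0((B^q)^n {\bf Nil}({\rm Free}(R)))$, the identity
\[
[(F_*, (f_i, f_i'), \nu)] = \sum_{j=1}^{m} \bigl[\bigl(\ker(\nu^j)/\ker(\nu^{j-1}), (\bar f_i, \bar f_i'), 0\bigr)\bigr],
\]
where $\bar f_i, \bar f_i'$ denote the induced differentials on each subquotient. Crucially, the filtration $\ker(\nu^{\bullet})$ is stable under the $(f_i, f_i')$ regardless of whether the last slot carries $\nu$ or $0$; hence the same filtration applied to $(F_*, (f_i, f_i'), 0)$ produces the identical right-hand side. Comparing the two equalities gives $[(F_*, (f_i, f_i'), \nu)] = [(F_*, (f_i, f_i'), 0)]$, so the generator of ${\rm Nil}_n(R)$ is trivial.

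The main obstacle will be the bookkeeping required to verify that the filtration yields admissible short exact sequences of multi-complexes \emph{compatibly} in all $n$ directions at once, and that each subquotient remains an acyclic binary multi-complex. This reduces to the pairwise commutativity of the $(f_i, f_i')$ and the commutativity of each differential with $\nu$, both of which are automatic for objects of $(B^q)^n {\bf Nil}$; grade-wise projectivity of the subquotients then supplies the splittings needed for admissibility. A mild subtlety is that subquotients are only projective rather than free, which can be absorbed by passing between ${\bf Nil}({\rm Free}(R))$ and ${\bf Nil}({\bf P}(R))$ (which have the same $K_0$) or by stabilizing. Once these checks are in place, the iterative reduction is mechanical and mirrors the $n=1$ case from Theorem \ref{nil1}.
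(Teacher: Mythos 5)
Your proposal follows essentially the same route as the paper: filter the underlying graded object by $\ker(\nu^{j})$, use the Pr\"{u}fer hypothesis to make the successive subquotients finitely generated torsion-free, hence projective, so that the filtration splits grade-wise and is compatible with all the differentials in every direction at once (the paper's ``net''), observe that the induced nilpotent on each subquotient vanishes, and conclude that both $[(F_*,(f_i,f_i'),\nu)]$ and $[(F_*,(f_i,f_i'),0)]$ equal the same sum of subquotient classes with zero endomorphism. Your write-up is if anything slightly cleaner than the paper's (one global kernel filtration rather than the per-column minimum-index adjustment), and the verifications you flag --- that the subquotients remain acyclic binary multi-complexes and that projective-but-not-free graded pieces must be absorbed by passing to ${\bf P}(R)$ or by stabilizing --- are precisely the points the paper also leaves implicit.
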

Thus, we record our final statement in this direction below.
\begin{theorem}
    If $R$ is a Pr{\"u}fer domain then $K_n(R) \cong K_n(R[t])$ for all $n \geq 0.$
\end{theorem}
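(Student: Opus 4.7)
The plan is to assemble three ingredients that are already in place by this point in the paper: (i) the canonical splitting $K_{n}(R[t]) \cong K_{n}(R) \oplus NK_{n}(R)$; (ii) the identification $NK_{n}(R) \cong {\rm Nil}_{n-1}(R)$ from \cite{wei1}, Theorem V.8.1; and (iii) the preceding theorem of this section, which asserts ${\rm Nil}_{m}(R) = 0$ for every $m \geq 0$ whenever $R$ is Pr\"{u}fer.

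For $n \geq 1$, I would simply combine these three: by (iii) we have ${\rm Nil}_{n-1}(R) = 0$, which forces $NK_{n}(R) = 0$ via (ii), after which (i) collapses to $K_{n}(R[t]) \cong K_{n}(R)$. Equivalently, because $NK_{n}(R)$ is the kernel of the split surjection $K_{n}(R[t]) \to K_{n}(R)$ induced by $t \mapsto 0$, its vanishing is exactly the statement that the base change map is an isomorphism. So for every $n \geq 1$ the theorem follows at once from the previous result.

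The remaining case is $n = 0$, where $NK_{0}(R)$ is not accounted for by the ${\rm Nil}_{-1}$ machinery and must be handled separately. Here I would appeal to the classical fact that, since every finitely generated torsion-free module over a Pr\"{u}fer domain is projective (a property recalled in Section 2), every finitely generated projective $R[t]$-module is extended from $R$; this yields $K_{0}(R) \cong K_{0}(R[t])$. The main obstacle at this stage is really just identifying the right classical input for the $n = 0$ case, because the heavy lifting---the vanishing of ${\rm Nil}_{n}(R)$ for $n \geq 0$---has already been carried out via Grayson's generators in the preceding theorem. What remains is essentially a careful invocation of (i)--(iii) plus a pointer to the classical input at $n = 0$.
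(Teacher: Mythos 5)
Your argument for $n \ge 1$ is exactly the paper's: the theorem is stated there with no further proof, resting on the chain ${\rm Nil}_{n-1}(R)=0 \Rightarrow NK_{n}(R)=0 \Rightarrow K_{n}(R[t])\cong K_{n}(R)$ already laid out in the introduction, which is precisely your ingredients (i)--(iii) combined with the preceding vanishing theorem. You are in fact more careful than the paper about $n=0$, which the Nil machinery does not reach; just note that extendedness of finitely generated projective $R[t]$-modules over a Pr\"ufer domain is a theorem in its own right (Lequain--Simis, or Bass--Murthy for the $K_{0}$ statement) and does not follow in one step from ``finitely generated torsion-free $\Rightarrow$ projective.''
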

As an obvious corollary to the theorem would be for valuation rings.
\begin{corollary}
     If $R$ is a Pr{\"u}fer domain then $K_n(R) \cong K_n(R[t])$ for all $n \geq 0.$
\end{corollary}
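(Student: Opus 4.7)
The plan is to reduce the assertion to the vanishing of the Nil-$K$-groups established in this section. The inclusion $R \hookrightarrow R[t]$ together with the evaluation map $t \mapsto 0$ induces the canonical splitting
\[
K_n(R[t]) \;\cong\; K_n(R) \oplus NK_n(R),
\]
so the desired isomorphism $K_n(R) \cong K_n(R[t])$ is equivalent to the vanishing of $NK_n(R)$ for the relevant indices. Stripping the statement down to the computation of $NK_n$ is the first move.

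For $n \geq 1$, I would then combine this splitting with the identification $NK_{n+1}(R) \cong {\rm Nil}_n(R)$ recalled from Weibel in the introduction. This converts the question into the assertion ${\rm Nil}_m(R) = 0$ for every $m \geq 0$, which is precisely the theorem just proved in this section using the Grayson binary-complex generators. Substituting $m = n-1$ yields $NK_n(R) = 0$ for all $n \geq 1$, and hence $K_n(R) \cong K_n(R[t])$ in that range. Since every valuation ring and every Dedekind domain is a Prüfer domain (as noted in Section 2), the corollary is then an immediate specialization once the Prüfer case is settled.

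The $n = 0$ case is the one that falls outside the Grayson framework developed here, since the generators-of-Nil-$K$ technique naturally addresses $NK_{m+1}$ with $m \geq 0$ but not $NK_0$ itself. The plan for this step is to invoke the classical extension result for a polynomial ring over a Prüfer domain, namely that every finitely generated projective $R[t]$-module is extended from $R$, which gives $K_0(R) \cong K_0(R[t])$ directly. This will be the main obstacle to a fully self-contained argument in the spirit of this paper: it is the one input drawn from outside the binary-complex machinery, and unlike the higher cases it cannot be derived by iterating the kernel-filtration argument of Theorem \ref{nil1}, because there is no ${\rm Nil}_{-1}$ to compute.
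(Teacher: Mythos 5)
Your proposal is correct and follows essentially the same route as the paper: the canonical splitting $K_n(R[t]) \cong K_n(R) \oplus NK_n(R)$, the identification $NK_{n+1}(R) \cong {\rm Nil}_n(R)$, and the vanishing of ${\rm Nil}_m(R)$ for Pr\"{u}fer domains established in this section. Note that the corollary as printed is a verbatim repetition of the preceding theorem; from the surrounding sentence (``an obvious corollary \dots for valuation rings'') it is clearly intended as the specialization to valuation rings, and your closing observation that valuation rings and Dedekind domains are Pr\"{u}fer supplies exactly the one-line argument the paper leaves implicit. Your separate treatment of $n=0$ is a genuine refinement over the paper's own presentation: the theorem is claimed for all $n \geq 0$, but the mechanism ${\rm Nil}_n(R) \cong NK_{n+1}(R)$ only yields $K_n(R) \cong K_n(R[t])$ for $n \geq 1$ (consistently, the abstract restricts to $n>0$), so invoking the classical fact that finitely generated projective $R[t]$-modules over a Pr\"{u}fer domain are extended from $R$ is the correct way to close the $K_0$ case, and you are right that it cannot be produced by the kernel-filtration argument since there is no ${\rm Nil}_{-1}$ in this framework.
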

\begin{remark}
    For a detailed discussion on homotopy invariance of $K$-theory on Pr{\"u}fer domains, please refer to \cite{BS22}.
\end{remark}
As noetherian Pr{\"u}fer domains are Dedekind domains we recover a fundamental theorem of $K$-theory for Dedekind domain that is 
\begin{theorem}
     Let $R$ be a Dedekind domain, then $K_n(R[s]) \cong K_n(R)$, in addition $$K_n(R[s,s^{-1}]) \cong K_n(R) \oplus K_{n-1}(R).$$
\end{theorem}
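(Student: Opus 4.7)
The plan is to deduce both halves from material already in hand: the first isomorphism is a direct specialization of the preceding theorem, and the second follows by feeding the vanishing of ${\rm Nil}_n(R)$ into the Bass--Heller--Swan decomposition. Concretely, since a Dedekind domain is exactly a Noetherian Pr{\"u}fer domain, the theorem just proved for Pr{\"u}fer domains specializes without further work to yield $K_n(R[s]) \cong K_n(R)$ for all $n \geq 0$.

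For the second isomorphism I would invoke the Bass fundamental theorem of algebraic $K$-theory (Theorem V.8.1 of \cite{wei1}), which for every $n \geq 1$ gives a natural decomposition
$$K_n(R[s,s^{-1}]) \cong K_n(R) \oplus K_{n-1}(R) \oplus NK_n(R) \oplus NK_n(R).$$
Combining this with the identification $NK_{n+1}(R) \cong {\rm Nil}_n(R)$ recalled in the introduction, the vanishing ${\rm Nil}_n(R) = 0$ for all $n \geq 0$ proved in the previous theorem forces $NK_n(R) = 0$ for every $n \geq 1$. Substituting into the decomposition collapses the two Nil-type summands and leaves precisely $K_n(R) \oplus K_{n-1}(R)$, as required.

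I do not anticipate a genuine obstacle, since the technical heart of the paper, namely the triviality of the Nil groups via Grayson's generators, is already established; what remains is a clean formal consequence of Bass--Heller--Swan. The one subtlety worth recording is that the Grayson-generator argument directly controls ${\rm Nil}_n$ only for $n \geq 0$, and hence $NK_n$ only for $n \geq 1$, so the argument above is used for those $n$. If one wishes to include $n = 0$ as well, the vanishing of $NK_0$ for a Dedekind (in particular regular) domain is classical from the fact that $\Pic(R) \cong \Pic(R[s])$ in the regular case, and feeding this into Bass--Heller--Swan yields the $n=0$ instance $K_0(R[s,s^{-1}]) \cong K_0(R) \oplus K_{-1}(R)$, completing the proof.
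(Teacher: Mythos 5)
Your proposal is correct and follows essentially the same route as the paper: the first isomorphism is obtained by specializing the Pr\"{u}fer-domain theorem to the Noetherian case, and the second by invoking the Fundamental Theorem (Bass--Heller--Swan) from Weibel's book together with the vanishing of the $NK$-terms. The paper's own proof is just a citation to the relevant theorem in \cite{wei1}; you have merely spelled out the same argument (including the harmless $n=0$ caveat) in more detail.
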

\begin{proof}
    Since a Dedekind domain is a noetherian Pr{\"u}fer domain, the first part of the theorem is trivially true. The proof of $K_n(R[s,s^{-1}]) \cong K_n(R) \oplus K_{n-1}(R)$ follows from  the proof of \cite{wei1}, Section V, Theorem 6.3.
\end{proof}

 \printbibliography
 \end{document}